
\documentclass[reqno]{amsart}
\usepackage{amssymb}
\usepackage{eurosym}
\usepackage{amsfonts}
\usepackage{amsmath}
\usepackage{graphicx}
\usepackage{amscd}

\setcounter{MaxMatrixCols}{10}

\newtheorem{theorem}{Theorem}
\theoremstyle{plain}
\newtheorem{acknowledgement}{Acknowledgement}

\newtheorem{example}{Example}

\newtheorem{lemma}{Lemma}

\newtheorem{remark}{Remark}

\numberwithin{equation}{section}

\input{tcilatex}

\begin{document}
\title[Non-monotone arguments]{An oscillation criterion for delay
Differential\\
Equations with several non-monotone arguments}
\author{H. AKCA}
\address{Department of Applied Sciences and Mathematics\\
College of Arts and Sciences, Abu Dhabi University\\
Abu Dhabi, UAE}
\email{haydar.akca@adu.ac.ae, akcahy@yahoo.com}
\author{G. E. CHATZARAKIS$^{\blacktriangledown }$}
\address{Department of Electrical and Electronic Engineering Educators\\
School of Pedagogical and Technological Education (ASPETE)\\
14121, N. Heraklio, Athens, Greece}
\email{geaxatz@otenet.gr, gea.xatz@aspete.gr}
\author{I. P. STAVROULAKIS}
\address{Department of Mathematics\\
University of Ioannina\\
451 10 Ioannina, Greece}
\email{ipstav@uoi.gr}
\thanks{$^{\blacktriangledown }$Corresponding author : George E.
Chatzarakis; email address: geaxatz@otenet.gr; gea.xatz@aspete.gr; tel.
+30-210-2896774; Greece\textit{\ }}
\maketitle

\begin{abstract}
The oscillatory behavior of the solutions to a differential equation with
several non-monotone delay arguments and non-negative coefficients is
studied. A new sufficient oscillation condition, involving $\lim \sup $, is
obtained. An example illustrating the significance of the result is also
given.

\vskip0.2cm\textbf{Keywords}: differential equation, non-monotone delay
argument, oscillatory solutions, nonoscillatory solutions.

\vskip0.2cm\textbf{2010 Mathematics Subject Classification}: 34K11, 34K06.
\end{abstract}

\section{INTRODUCTION}

The paper deals with the differential equation with several non-monotone
delay arguments of the form%
\begin{equation}
x^{\prime }(t)+\dsum\limits_{i=1}^{m}p_{i}(t)x\left( \tau _{i}(t)\right) =0,%
\text{ \ \ }\forall t\geq 0\text{,}  \tag{1.1}
\end{equation}%
where $p_{i}$, $1\leq i\leq m$, are functions of nonnegative real numbers,
and $\tau _{i}$, $1\leq i\leq m$, are non-monotone functions of positive
real numbers such that%
\begin{equation}
\tau _{i}(t)<t\text{, \ \ }t\geq 0\text{ \ \ \ \ and \ \ \ \ }%
\lim_{t\rightarrow \infty }\tau _{i}(t)=\infty \text{, \ \ \ }1\leq i\leq m%
\text{.}  \tag{1.2}
\end{equation}

Let $T_{0}\in \lbrack 0,+\infty )$, $\tau (t)=\min \{\tau _{i}(t):i=1,\dots
,m\}$ and $\tau _{(-1)}(t)=\sup \{s:\tau (s)\leq t\}$. By a \textit{solution}
of the equation (1.1) we understand a function $x\in C([T_{0},+\infty );%
\mathbb{R}
)$, continuously differentiable on $[\tau _{(-1)}(T_{0}),+\infty )$ and that
satisfies (1.1) for $t\geq \tau _{(-1)}(T_{0})$.

A solution $x(t)$ of (1.1) is \textit{oscillatory}, if it is neither
eventually positive nor eventually negative. If there exists an eventually
positive or an eventually negative solution, the equation is \textit{%
nonoscillatory}. An equation is \textit{oscillatory} if all its solutions
oscillate.

The problem of establishing sufficient conditions for the oscillation of all
solutions of equation (1.1) has been the subject of many investigations.
See, for example, [2, 3, 5$-$13, 15,17,18] and the references cited therein.
Most of these papers concern the special case where the arguments are
nondecreasing, while a small number of these papers are dealing with the
general case where the arguments are non-monotone. See, for example, [2$,$3,
16] and the references cited therein. For the general oscillation theory of
differential equations the reader is referred to the monographs [1, 4, 14].

In 1978 Ladde [13] and in 1982 Ladas and Stavroulakis [12] proved that if 
\begin{equation}
\liminf_{t\rightarrow \infty }\int\limits_{\tau _{\max
}(t)}^{t}\sum\limits_{i=1}^{m}p_{i}(s)ds>\frac{1}{e},  \tag{1.3}
\end{equation}%
where $\tau _{\max }(t)=\max_{1\leq i\leq m}\{\tau _{i}(t)\},$ then all
solutions of (1.1) oscillate.

In 1984, Hunt and Yorke [7] proved that if $t-\tau _{i}(t)\leq \tau _{0}$, $%
1\leq i\leq m,$ and 
\begin{equation}
\liminf_{t\rightarrow \infty }\sum\limits_{i=1}^{m}p_{i}(t)\left( t-\tau
_{i}(t)\right) >\frac{1}{e},  \tag{1.4}
\end{equation}%
then all solutions of (1.1) oscillate.

When $m=1,$ that is in the special case of the equation 
\begin{equation}
x^{\prime }(t)+p(t)x\left( \tau (t)\right) =0,\ \ \forall t\geq 0, 
\tag{1.1$^{\prime }$}
\end{equation}%
in 1991, Kwong [11], proved that if 
\begin{equation*}
0<\alpha :=\liminf_{t\rightarrow \infty }\int_{\tau (t)}^{t}p(s)ds\leq 1/e,
\end{equation*}

\begin{equation}
\tau (t)\text{ is decreasing \ \ \ \ \ \ and \ \ \ \ \ \ }%
\limsup_{t\rightarrow \infty }\int\limits_{\tau (t)}^{t}p(s)ds>\frac{1+\ln
\lambda _{0}}{\lambda _{0}},  \tag{1.5}
\end{equation}%
where $\lambda _{0}$ is the smaller root of the equation $\lambda =e^{\alpha
\lambda }$, then all solutions of $(1.1)^{\prime }$ oscillate.

Recently, Braverman, Chatzarakis and Stavroulakis [2], established the
following theorem in the general case that the arguments $\tau _{i}(t)$, $%
1\leq i\leq m$ are non-monotone.

\begin{theorem}
Assume that $p_{i}(t)\geq 0$, $1\leq i\leq m$, 
\begin{equation}
h(t)=\max_{1\leq i\leq m}h_{i}(t),\text{ \ \ where \ }h_{i}(t)=\sup_{0\leq
s\leq t}\tau _{i}(s),\text{ \ \ }t\geq 0  \tag{1.6}
\end{equation}%
and $a_{r}(t,s)$, $r\in {\mathbb{N}}$ are defined as%
\begin{equation}
a_{1}(t,s):=\exp \left\{ \int_{s}^{t}\sum_{i=1}^{m}p_{i}(\zeta )~d\zeta
\right\} \text{, }a_{r+1}(t,s):=\exp \left\{
\int_{s}^{t}\sum_{i=1}^{m}p_{i}(\zeta )a_{r}(\zeta ,\tau _{i}(\zeta
))~d\zeta \right\} .  \tag{1.7}
\end{equation}%
If for some $r\in 
\mathbb{N}
$%
\begin{equation}
\limsup_{t\rightarrow \infty }\int_{h(t)}^{t}\sum_{i=1}^{m}p_{i}(\zeta
)a_{r}(h(t),\tau _{i}(\zeta ))~d\zeta >1,  \tag{1.8}
\end{equation}%
or%
\begin{equation*}
0<\alpha :=\liminf_{t\rightarrow \infty
}\int_{h(t)}^{t}\sum_{i=1}^{m}p_{i}(s)\,ds\leq 1/e,
\end{equation*}%
and%
\begin{equation}
\limsup_{t\rightarrow \infty }\int_{h(t)}^{t}\sum_{i=1}^{m}p_{i}(\zeta
)a_{r}(h(t),\tau _{i}(\zeta ))~d\zeta >1-\frac{1-\alpha -\sqrt{1-2\alpha
-\alpha ^{2}}}{2},  \tag{1.9}
\end{equation}
then all solutions of $(1.1)$ oscillate.
\end{theorem}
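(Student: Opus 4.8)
The plan is to argue by contradiction. Suppose (1.1) admits a nonoscillatory solution; since the equation is linear with nonnegative coefficients, I may assume $x(t)>0$ eventually (the eventually negative case follows by replacing $x$ with $-x$). Fix $T$ so large that $x(t)>0$ and $x(\tau_i(t))>0$ for all $t\ge T$ and all $i$. Then (1.1) gives $x'(t)=-\sum_{i=1}^m p_i(t)x(\tau_i(t))\le 0$, so $x$ is nonincreasing on $[T,\infty)$.

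The crucial preliminary step is the inductive inequality
$$\frac{x(s)}{x(t)}\ge a_r(t,s),\qquad T\le s\le t,\ r\in\mathbb{N}.$$
For $r=1$: since $x$ is nonincreasing and $\tau_i(\zeta)<\zeta$ we have $x(\tau_i(\zeta))\ge x(\zeta)$, hence $x'(\zeta)/x(\zeta)\le-\sum_i p_i(\zeta)$, and integrating from $s$ to $t$ yields $x(s)/x(t)\ge a_1(t,s)$. For the inductive step I substitute the bound $x(\tau_i(\zeta))\ge a_r(\zeta,\tau_i(\zeta))x(\zeta)$ into (1.1) to obtain $x'(\zeta)/x(\zeta)\le-\sum_i p_i(\zeta)a_r(\zeta,\tau_i(\zeta))$, and a second integration reproduces exactly $a_{r+1}(t,s)$ through the recursion (1.7).

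Next I turn to the main integral estimate. Because $h$ is nondecreasing with $h(t)<t$, for $\zeta\in[h(t),t]$ and every $i$ one has $\tau_i(\zeta)\le h_i(\zeta)\le h(\zeta)\le h(t)$, so applying the inductive inequality with the pair $(h(t),\tau_i(\zeta))$ gives $x(\tau_i(\zeta))\ge a_r(h(t),\tau_i(\zeta))\,x(h(t))$. Integrating (1.1) over $[h(t),t]$ then produces
$$x(h(t))=x(t)+\int_{h(t)}^{t}\sum_{i=1}^m p_i(\zeta)x(\tau_i(\zeta))\,d\zeta\ge x(t)+x(h(t))\int_{h(t)}^{t}\sum_{i=1}^m p_i(\zeta)a_r(h(t),\tau_i(\zeta))\,d\zeta.$$
Discarding the positive term $x(t)$ and dividing by $x(h(t))>0$ forces $\int_{h(t)}^{t}\sum_i p_i(\zeta)a_r(h(t),\tau_i(\zeta))\,d\zeta<1$ for all large $t$, and taking $\limsup$ contradicts (1.8). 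This settles the first case.

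For the second case I would \emph{not} discard $x(t)$ but instead sharpen it. The genuine obstacle is proving the Koplatadze--Chanturia type lower bound
$$\liminf_{t\to\infty}\frac{x(t)}{x(h(t))}\ge\frac{1-\alpha-\sqrt{1-2\alpha-\alpha^2}}{2}=:d(\alpha),$$
valid whenever $0<\alpha\le 1/e$; this is obtained by iterating the integrated inequality $x(h(t))\ge x(t)\exp\{\int_{h(t)}^{t}\sum_i p_i\}$ against the definition of $\alpha$ and optimizing over the resulting one-parameter family of estimates, and it is the technical heart of the argument. Granting it, for any $\varepsilon>0$ and all large $t$ I retain $x(t)\ge(d(\alpha)-\varepsilon)x(h(t))$ in the displayed inequality above, obtaining $\int_{h(t)}^{t}\sum_i p_i(\zeta)a_r(h(t),\tau_i(\zeta))\,d\zeta\le 1-d(\alpha)+\varepsilon$; letting $t\to\infty$ and then $\varepsilon\to 0$ contradicts (1.9). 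In either case an eventually positive solution cannot exist, so every solution of (1.1) oscillates.
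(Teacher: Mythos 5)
You should first note that the paper itself contains no proof of this statement: it is Theorem~1, quoted verbatim from reference [2] (Braverman, Chatzarakis and Stavroulakis), and the paper only proves its own Theorem~2. So your proposal has to be judged on its own merits. The first half of your argument, for condition (1.8), is correct and complete: your inductive estimate $x(s)\geq a_{r}(t,s)\,x(t)$ is exactly Lemma~1 of the paper (which you reprove correctly by induction on $r$), the observation that $\tau _{i}(\zeta )\leq h(\zeta )\leq h(t)$ for $\zeta \in \lbrack h(t),t]$ is the right way to exploit the non-monotone delays through $h$, and integrating (1.1) over $[h(t),t]$, discarding $x(t)>0$, gives $\int_{h(t)}^{t}\sum_{i=1}^{m}p_{i}(\zeta )a_{r}(h(t),\tau _{i}(\zeta ))\,d\zeta <1$ for all large $t$, contradicting (1.8).

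The second half, for condition (1.9), has a genuine gap, and it is not only that you ``grant'' the key estimate $\liminf_{t\rightarrow \infty }x(t)/x(h(t))\geq d(\alpha ):=\frac{1-\alpha -\sqrt{1-2\alpha -\alpha ^{2}}}{2}$ without proof: the method you sketch for obtaining it cannot work. Iterating the integrated inequality $x(h(t))\geq x(t)\exp \{\int_{h(t)}^{t}\sum_{i}p_{i}\}$ produces \emph{lower} bounds on the ratio $x(h(t))/x(t)$ --- this is precisely how the paper's Lemma~2 derives $\liminf x(h(t))/x(t)\geq \lambda _{0}$ --- and hence \emph{upper} bounds on $x(t)/x(h(t))$, which is the opposite direction of what you need. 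A lower bound on $x(t)/x(h(t))$ says that $x$ cannot decay too fast across the delay interval, and no amount of exponential iteration of that type yields it. The actual proof of the $d(\alpha )$ estimate (Koplatadze--Kvinikadze/Yu--Wang--Zhang--Qian type, which [2] invokes from the literature) is a different argument: one integrates equation (1.1) itself (not its logarithmic derivative) over a splitting $[h(t),t^{\ast }]$, $[t^{\ast },t]$ of the delay interval, with $t^{\ast }$ chosen so that the coefficient integral over each piece is controlled by $\alpha$, uses the monotonicity of $x$ and a nested second integration, and arrives at a quadratic inequality for $d=\liminf x(t)/x(h(t))$ whose smaller root is $d(\alpha )$ --- that quadratic is the only place the expression $\sqrt{1-2\alpha -\alpha ^{2}}$ can come from. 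Since the whole of case (1.9) reduces to this lemma, and your route to it is misdirected, the second case of the theorem remains unproved in your proposal.
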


An oscillation criterion involving $\lim \sup $, which essentially improves
the above results is established. An example illustrating the result is also
given.

\section{MAIN RESULT}

The proof of our main result is essentially based on the following lemmas.

\begin{lemma}
$[$2, Lemma 1$]$ Assume that $x(t)$ is a positive solution of $(1.1)$ and $%
a_{r}(t,s)$ are defined by $(1.7)$. Then%
\begin{equation}
x(t)a_{r}(t,s)\leq x(s),\text{ \ \ \ \ }t\geq s\geq 0.  \tag{2.1}
\end{equation}
\end{lemma}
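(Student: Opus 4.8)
The plan is to assume that $x(t)$ is a positive solution of $(1.1)$ and to establish the inequality $x(t)a_{r}(t,s)\leq x(s)$ by induction on $r$. The starting point is the differential equation itself: since $p_{i}(t)\geq 0$ and $x(\tau_{i}(t))>0$ eventually, equation $(1.1)$ forces $x^{\prime}(t)\leq 0$, so $x$ is eventually nonincreasing. This monotonicity is the structural fact that drives everything.

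For the base case $r=1$, I would integrate $(1.1)$ in the form $x^{\prime}(\zeta)/x(\zeta)=-\sum_{i=1}^{m}p_{i}(\zeta)x(\tau_{i}(\zeta))/x(\zeta)$. Because $x$ is nonincreasing and $\tau_{i}(\zeta)<\zeta$, we have $x(\tau_{i}(\zeta))\geq x(\zeta)$, hence $x(\tau_{i}(\zeta))/x(\zeta)\geq 1$. Integrating from $s$ to $t$ then yields
\begin{equation*}
\ln\frac{x(s)}{x(t)}=\int_{s}^{t}\sum_{i=1}^{m}p_{i}(\zeta)\frac{x(\tau_{i}(\zeta))}{x(\zeta)}\,d\zeta\geq\int_{s}^{t}\sum_{i=1}^{m}p_{i}(\zeta)\,d\zeta,
\end{equation*}
which exponentiates exactly to $x(s)/x(t)\geq a_{1}(t,s)$, i.e. $x(t)a_{1}(t,s)\leq x(s)$.

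For the inductive step, I would assume $x(t)a_{r}(t,s)\leq x(s)$ holds for all $t\geq s\geq 0$ and aim to prove it with $r+1$ in place of $r$. The key is to sharpen the crude bound $x(\tau_{i}(\zeta))/x(\zeta)\geq 1$ used above. Applying the induction hypothesis with the pair $(\zeta,\tau_{i}(\zeta))$ in place of $(t,s)$ gives $x(\zeta)a_{r}(\zeta,\tau_{i}(\zeta))\leq x(\tau_{i}(\zeta))$, so $x(\tau_{i}(\zeta))/x(\zeta)\geq a_{r}(\zeta,\tau_{i}(\zeta))$. Substituting this improved lower bound into the integrated logarithmic identity produces
\begin{equation*}
\ln\frac{x(s)}{x(t)}\geq\int_{s}^{t}\sum_{i=1}^{m}p_{i}(\zeta)a_{r}(\zeta,\tau_{i}(\zeta))\,d\zeta,
\end{equation*}
and exponentiating gives precisely $x(t)a_{r+1}(t,s)\leq x(s)$ by the definition $(1.7)$ of $a_{r+1}$. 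The recursive definition of the $a_{r}$ is engineered so that the inductive step feeds cleanly back into itself.

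The main obstacle I anticipate is not the algebra but the bookkeeping of eventuality: the sign argument only guarantees $x^{\prime}\leq 0$ for $t$ large enough (once $\tau_{i}(t)$ lands in the region where $x$ is positive), so one must be careful about the range of validity of the monotonicity and hence of the integration. Since the conclusion is stated for all $t\geq s\geq 0$, I would need to verify that the hypotheses of the lemma (positivity of $x$ together with $(1.2)$) legitimately propagate the nonincreasing property over the whole relevant range, or else interpret the statement in the eventual sense consistent with the cited source $[2]$. Apart from this, each step is a routine monotonicity-plus-integration estimate, and the elegance lies in how the definition of $a_{r+1}$ absorbs the induction hypothesis.
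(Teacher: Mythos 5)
Your proof is correct: the induction on $r$, with the base case obtained from $x(\tau_{i}(\zeta))\geq x(\zeta)$ (monotonicity) and the inductive step obtained by feeding $x(\tau_{i}(\zeta))\geq x(\zeta)a_{r}(\zeta,\tau_{i}(\zeta))$ back into the integrated form of $(1.1)$, is exactly the intended argument. Note that the paper itself gives no proof of this lemma --- it is quoted from $[$2, Lemma 1$]$ --- and your argument coincides with the proof in that reference; your closing caveat, that positivity and hence monotonicity hold only eventually so the inequality should be read for sufficiently large $s$ (or for $x$ positive on the whole relevant range), is also the correct reading of the statement.
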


\begin{lemma}
$[$cf. 8$]$ Assume that $x(t)$ is a positive solution of $(1.1)$, and%
\begin{equation}
0<\alpha :=\liminf_{t\rightarrow \infty }\int_{\tau
(t)}^{t}\sum_{i=1}^{m}p_{i}(s)ds\leq \frac{1}{e}\text{,}  \tag{2.2}
\end{equation}%
where $\tau (t)=\max_{1\leq i\leq m}\tau _{i}(t)$. Then we have%
\begin{equation}
\liminf_{t\rightarrow \infty }\frac{x(h(t))}{x(t)}\geq \lambda _{0}\text{,} 
\tag{2.3}
\end{equation}%
where $h(t)$ is defined by $(1.6)$ and $\lambda _{0}$ is the smaller root of
the equation $\lambda =e^{\alpha \lambda }$.
\end{lemma}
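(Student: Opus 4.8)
The plan is to transplant the classical single–delay argument of reference [8] to the non-monotone setting, using the non-decreasing majorant $h$ of (1.6) in place of the delay itself. First, since $x(t)>0$ and the $\tau_i$ satisfy (1.2), for all large $t$ the delayed values $x(\tau_i(t))$ are positive, so by (1.1) we have $x'(t)=-\sum_{i=1}^m p_i(t)x(\tau_i(t))\le 0$; hence $x$ is eventually non-increasing. The function $h$ is non-decreasing, satisfies $\tau_i(t)\le h(t)$ for every $i$, and $h(t)\to\infty$ (because $h(t)\ge\tau(t)\to\infty$). Since $h(t)<t$ we get $x(h(t))\ge x(t)>0$, so $\omega(t):=x(h(t))/x(t)$ is well defined and $\ge 1$ for large $t$. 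Writing $P(s):=\sum_{i=1}^m p_i(s)$ and $r:=\liminf_{t\to\infty}\omega(t)\ge 1$, the goal is to show $r\ge\lambda_0$.

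Next I would derive a self-referential integral inequality for $\omega$. For large $s$, monotonicity of $x$ together with $\tau_i(s)\le h(s)$ gives $x(\tau_i(s))\ge x(h(s))$, whence
\begin{equation*}
-\frac{x'(s)}{x(s)}=\sum_{i=1}^m p_i(s)\frac{x(\tau_i(s))}{x(s)}\ge\frac{x(h(s))}{x(s)}\,P(s)=\omega(s)P(s).
\end{equation*}
Integrating from $h(t)$ to $t$ and using $\int_{h(t)}^t\bigl(-x'(s)/x(s)\bigr)\,ds=\ln\bigl(x(h(t))/x(t)\bigr)=\ln\omega(t)$ yields
\begin{equation*}
\ln\omega(t)\ge\int_{h(t)}^t\omega(s)P(s)\,ds.
\end{equation*}

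Finally I would pass to the limit. We may assume $r<\infty$, since otherwise the conclusion is trivial. Fix any $M$ with $1\le M<r$; by definition of $r$ one has $\omega(s)\ge M$ for all large $s$, so for $t$ large (so that $h(t)$ is large) the previous inequality gives $\ln\omega(t)\ge M\int_{h(t)}^t P(s)\,ds$. Here I would invoke the key property of the majorant $h$, namely that it preserves the lower averaged integral,
\begin{equation*}
\liminf_{t\to\infty}\int_{h(t)}^t P(s)\,ds=\liminf_{t\to\infty}\int_{\tau(t)}^t P(s)\,ds=\alpha ;
\end{equation*}
this holds because, writing $h(t)=\tau(t^{*})$ for a maximiser $t^{*}\le t$ (with $t^{*}\to\infty$ since $t^{*}>\tau(t^{*})=h(t)$) and using $P\ge 0$, one has $\int_{h(t)}^t P\ge\int_{\tau(t^{*})}^{t^{*}}P\ge\alpha-\varepsilon$ for every $\varepsilon>0$ and $t$ large. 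Taking $\liminf_{t\to\infty}$, and using that $\ln$ is continuous and increasing so that $\liminf_{t\to\infty}\ln\omega(t)=\ln r$, gives $\ln r\ge M\alpha$; letting $M\uparrow r$ yields $\ln r\ge\alpha r$, i.e. $r\ge e^{\alpha r}$. Setting $F(\lambda)=e^{\alpha\lambda}-\lambda$, whose two roots for $0<\alpha\le 1/e$ are $\lambda_0\le\lambda_1$ with $F\le 0$ exactly on $[\lambda_0,\lambda_1]$, the inequality $F(r)\le 0$ forces $r\in[\lambda_0,\lambda_1]$, and in particular $r\ge\lambda_0$, as required. The main obstacle is the displayed $\liminf$-preservation identity for $h$: the naive bound $h(t)\ge\tau(t)$ pushes the integral in the wrong direction, and it is precisely the fact that $h$ is the non-decreasing envelope of $\tau$ that recovers the value $\alpha$ and makes the non-monotone argument go through.
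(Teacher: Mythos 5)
Your proposal is correct and takes essentially the same route as the paper: both exploit that $x$ is eventually nonincreasing together with $\tau_i(s)\le h(s)$ to get $-x'(s)/x(s)\ge \frac{x(h(s))}{x(s)}\sum_{i=1}^{m}p_i(s)$, integrate this over $[h(t),t]$, and combine with the identity $\liminf_{t\rightarrow\infty}\int_{h(t)}^{t}\sum_{i=1}^{m}p_i(s)\,ds=\alpha$ (the paper's (2.4), which it cites from [4, Lemma 2.1.1] while you supply the short near-maximizer argument) to arrive at the fixed-point inequality $\gamma\ge e^{\alpha\gamma}$ for $\gamma=\liminf_{t\rightarrow\infty}x(h(t))/x(t)$. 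The only differences are presentational: the paper proceeds by contradiction, working with the perturbed root $\lambda_0(\varepsilon)$ of $e^{(\alpha-\varepsilon)\lambda}=\lambda$ and letting $\varepsilon\rightarrow 0$, whereas you argue directly (also covering the case $\gamma=\infty$) and conclude $\gamma\ge\lambda_0$ from the convexity of $e^{\alpha\lambda}-\lambda$.
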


\begin{proof}
Assume that $x(t)$ is an eventually positive solution of $(1.1)$. Then there
exists $t_{1}>0$ such that $x(t),\ x\left( \tau _{i}(t)\right) >0,\ $for all 
$t\geq t_{1}.$ Thus, from $(1.1)$ we have%
\begin{equation*}
x^{\prime }(t)=-\dsum\limits_{i=1}^{m}p_{i}(t)x\left( \tau _{i}(t)\right)
\leq 0,\text{ \ \ for all }t\geq t_{1}\text{,}
\end{equation*}%
which means that $x(t)$ is an eventually nonincreasing function of positive
numbers.

Also, by a similar procedure as in the proof of Lemma 2.1.1 [4], we have%
\begin{equation}
\liminf_{t\rightarrow \infty
}\int_{h(t)}^{t}\sum_{i=1}^{m}p_{i}(s)ds=\liminf_{t\rightarrow \infty
}\int_{\tau (t)}^{t}\sum_{i=1}^{m}p_{i}(s)ds=\alpha .  \tag{2.4}
\end{equation}%
In view of this, for any $\varepsilon \in \left( 0,\alpha \right) $, there
exists $t_{\varepsilon }\in 
\mathbb{R}
_{+}$ such that%
\begin{equation}
\int_{h(t)}^{t}\sum_{i=1}^{m}p_{i}(s)ds\geq \alpha -\varepsilon \text{ \ \ \
\ for \ }t\geq t_{\varepsilon }\geq t_{1}\text{.}  \tag{2.5}
\end{equation}%
We will show that%
\begin{equation}
\liminf_{t\rightarrow \infty }\frac{x(h(t))}{x(t)}\geq \lambda _{0}\left(
\varepsilon \right) \text{,}  \tag{2.6}
\end{equation}%
where $\lambda _{0}\left( \varepsilon \right) $ is the smaller root of the
equation%
\begin{equation*}
e^{\left( \alpha -\varepsilon \right) \lambda }=\lambda \text{.}
\end{equation*}%
Assume, for the sake of contradiction, that (2.6) is not satisfied. Then
there exists $\varepsilon _{0}>0$ such that%
\begin{equation}
\frac{e^{\left( \alpha -\varepsilon \right) \gamma }}{\gamma }\geq
1+\varepsilon _{0}\text{,}  \tag{2.7}
\end{equation}%
where%
\begin{equation*}
\gamma =\liminf_{t\rightarrow \infty }\frac{x(h(t))}{x(t)}<\lambda
_{0}\left( \varepsilon \right) .
\end{equation*}%
On the other hand, for any $\delta >0$ there exists $t_{\delta }$ such that%
\begin{equation*}
\frac{x(h(t))}{x(t)}\geq \gamma -\delta \text{ \ \ \ \ for \ }t\geq
t_{\delta }\text{.}
\end{equation*}

Dividing $(1.1)$ by $x(t)$ we obtain%
\begin{equation*}
-\frac{x^{\prime }(t)}{x(t)}=\dsum\limits_{i=1}^{m}p_{i}(t)\frac{x\left(
\tau _{i}(t)\right) }{x(t)}\geq \dsum\limits_{i=1}^{m}p_{i}(t)\frac{x\left(
h(t)\right) }{x(t)}\geq \left( \gamma -\delta \right)
\dsum\limits_{i=1}^{m}p_{i}(t)\text{.}
\end{equation*}%
Integrating last inequality from $h(t)$ to $t$ for sufficiently large $t$,
and taking into account (2.5), we have%
\begin{equation*}
-\int_{h(t)}^{t}\frac{x^{\prime }(s)}{x(s)}ds\geq \left( \gamma -\delta
\right) \int_{h(t)}^{t}\dsum\limits_{i=1}^{m}p_{i}(s)ds\geq \left( \gamma
-\delta \right) \left( \alpha -\varepsilon \right) \text{,}
\end{equation*}%
or%
\begin{equation*}
\frac{x\left( h(t)\right) }{x(t)}\geq e^{\left( \alpha -\varepsilon \right)
\left( \gamma -\delta \right) }\text{ \ \ \ \ for large }t\text{.}
\end{equation*}%
Therefore%
\begin{equation*}
\gamma =\liminf_{t\rightarrow \infty }\frac{x(h(t))}{x(t)}\geq e^{\left(
\alpha -\varepsilon \right) \left( \gamma -\delta \right) }
\end{equation*}%
which implies%
\begin{equation*}
\gamma \geq e^{\left( \alpha -\varepsilon \right) \gamma }\text{.}
\end{equation*}%
This contradicts (2.7) and therefore (2.6) is true. Thus, as $\varepsilon
\rightarrow 0$, (2.6) implies (2.3). The proof of the lemma is complete.
\end{proof}

\begin{remark}
If $\alpha >1/e$ then equation $\lambda =e^{\alpha \lambda }$ has no real
roots. In this case, lemma is inappropriate since $(1.1)$ does not have
nonoscillatory solutions at all.
\end{remark}

\begin{theorem}
Assume that $(2.2)$ holds and \textit{for some }$r\in 
\mathbb{N}
$%
\begin{equation}
\limsup_{t\rightarrow \infty }\int_{h(t)}^{t}\sum_{i=1}^{m}p_{i}(\zeta
)a_{r}(h(\zeta ),\tau _{i}(\zeta ))~d\zeta >\frac{1+\ln \lambda _{0}}{%
\lambda _{0}},  \tag{2.8}
\end{equation}%
where $h(t)$ is defined by $(1.6)$, $a_{r}(t,s)$ is defined by $(1.7)$, and $%
\lambda _{0}$ is the smaller root of the equation $\lambda =e^{\alpha
\lambda }$. Then all solutions of $(1.1)$ oscillate.
\end{theorem}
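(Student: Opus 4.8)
The plan is to argue by contradiction in the standard way for oscillation results of this type. Suppose $(1.1)$ has a nonoscillatory solution; since the equation is linear with nonnegative coefficients we may assume it is eventually positive, so that (exactly as at the start of the proof of Lemma 2) $x(t)$ is eventually positive and nonincreasing. Write $P(\zeta):=\sum_{i=1}^{m}p_{i}(\zeta)\,a_{r}(h(\zeta),\tau_{i}(\zeta))$. Since $\tau_{i}(\zeta)\le h_{i}(\zeta)\le h(\zeta)$, Lemma 2.1 applied with $t=h(\zeta)$ and $s=\tau_{i}(\zeta)$ yields $x(\tau_{i}(\zeta))\ge a_{r}(h(\zeta),\tau_{i}(\zeta))\,x(h(\zeta))$, the inequality that converts the integral hypothesis $(2.8)$ into usable lower bounds on $x$. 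Let $A$ denote the $\limsup$ in $(2.8)$ and pick a sequence $t_{n}\to\infty$ along which $\int_{h(t_{n})}^{t_{n}}P\to A>\frac{1+\ln\lambda_{0}}{\lambda_{0}}>\frac{1}{\lambda_{0}}$ (the last inequality being automatic since $\lambda_{0}>1$ when $0<\alpha\le 1/e$).

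The heart of the argument is a splitting of $[h(t_{n}),t_{n}]$. Because $\zeta\mapsto\int_{\zeta}^{t_{n}}P$ decreases continuously from $\int_{h(t_{n})}^{t_{n}}P$ to $0$, for all large $n$ there is a point $t_{n}^{\ast}\in(h(t_{n}),t_{n})$ with $\int_{t_{n}^{\ast}}^{t_{n}}P(\zeta)\,d\zeta=\tfrac{1}{\lambda_{0}}$, and then $\int_{h(t_{n})}^{t_{n}^{\ast}}P(\zeta)\,d\zeta\to A-\tfrac{1}{\lambda_{0}}$. On the right piece I would integrate $(1.1)$ over $[t_{n}^{\ast},t_{n}]$, insert the lower bound for $x(\tau_{i})$ together with $x(h(\zeta))\ge x(h(t_{n}))$ (monotonicity of $x$ and of $h$), and use $x(t_{n})>0$ to get $x(t_{n}^{\ast})\ge x(h(t_{n}))\int_{t_{n}^{\ast}}^{t_{n}}P=\tfrac{1}{\lambda_{0}}x(h(t_{n}))$, that is, $x(h(t_{n}))/x(t_{n}^{\ast})\le\lambda_{0}$. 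On the left piece I would instead work logarithmically: dividing $(1.1)$ by $x$, using the same lower bound for $x(\tau_{i})$, and invoking Lemma 2 in the form $x(h(s))/x(s)\ge\lambda_{0}-\varepsilon$ for large $s$, integration over $[h(t_{n}),t_{n}^{\ast}]$ gives $\ln\frac{x(h(t_{n}))}{x(t_{n}^{\ast})}=\int_{h(t_{n})}^{t_{n}^{\ast}}\bigl(-\tfrac{x'}{x}\bigr)\ge(\lambda_{0}-\varepsilon)\int_{h(t_{n})}^{t_{n}^{\ast}}P$.

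Combining the two estimates yields $\lambda_{0}\ge\exp\bigl((\lambda_{0}-\varepsilon)\int_{h(t_{n})}^{t_{n}^{\ast}}P\bigr)$; letting $n\to\infty$ and then $\varepsilon\to 0$ gives $\lambda_{0}\ge\exp\bigl(\lambda_{0}(A-\tfrac{1}{\lambda_{0}})\bigr)=\exp(\lambda_{0}A-1)$, hence $A\le\frac{1+\ln\lambda_{0}}{\lambda_{0}}$, contradicting the choice of $A$. This contradiction rules out an eventually positive solution; the eventually negative case is symmetric, so all solutions of $(1.1)$ oscillate. I expect the main technical care to lie in the splitting step: verifying that $t_{n}^{\ast}$ exists for all large $n$, that $\int_{h(t_{n})}^{t_{n}^{\ast}}P\to A-\tfrac{1}{\lambda_{0}}$, and that the $\varepsilon$ from Lemma 2 and the ``for large $t$'' quantifiers are threaded correctly through both estimates before the limit is taken.
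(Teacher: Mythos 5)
Your proposal is correct and is essentially the paper's own argument: assume an eventually positive (hence nonincreasing) solution, split $[h(t),t]$ at an intermediate point, bound the right piece by integrating the equation and using Lemma 1, bound the left piece logarithmically by integrating $-x'/x$ and using Lemma 1 together with the ratio bound $x(h(\zeta))/x(\zeta)\ge\lambda_0-\varepsilon$ from Lemma 2, and combine the two estimates to force $\limsup_{t\to\infty}\int_{h(t)}^{t}\sum_{i=1}^{m}p_i(\zeta)a_r(h(\zeta),\tau_i(\zeta))\,d\zeta\le(1+\ln\lambda_0)/\lambda_0$, contradicting $(2.8)$. The only (harmless) deviation is bookkeeping: you pin the splitting point $t_n^*$ by the integral condition $\int_{t_n^*}^{t_n}P(\zeta)\,d\zeta=1/\lambda_0$ along a sequence realizing the $\limsup$, whereas the paper pins $t^*$ by the solution-value condition $x(h(t))/x(t^*)=\lambda_0-\varepsilon$ for every large $t$ — dual normalizations that yield the same two bounds and the same contradiction.
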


\begin{proof}
Assume, for the sake of contradiction, that there exists a nonoscillatory
solution $x(t)$ of (1.1). Since $-x(t)$ is also a solution of (1.1), we can
confine our discussion only to the case where the solution $x(t)$ is
eventually positive. Then there exists $t_{1}>0$ such that $x(t),\ x\left(
\tau _{i}(t)\right) >0,\ $for all $t\geq t_{1}.$ Thus, from (1.1) we have%
\begin{equation*}
x^{\prime }(t)=-\dsum\limits_{i=1}^{m}p_{i}(t)x\left( \tau _{i}(t)\right)
\leq 0,\text{ \ \ for all }t\geq t_{1}\text{,}
\end{equation*}%
which means that $x(t)$ is an eventually nonincreasing function of positive
numbers.

By Lemma 2, inequality (2.3) is fulfilled. Therefore%
\begin{equation}
\frac{x(h(t))}{x(t)}>\lambda _{0}-\varepsilon \text{,}\ \ \ \ \text{for all }%
t\geq t_{2}\geq t_{1}\text{,}  \tag{2.9}
\end{equation}%
where $\varepsilon $ is an arbitrary real number with $0<\varepsilon
<\lambda _{0}$. Thus, there exists a $t^{\ast }\in (h(t),t)$ such that%
\begin{equation}
\frac{x(h(t))}{x(t^{\ast })}=\lambda _{0}-\varepsilon ,\ \ \ \ \text{for all 
}t\geq t_{2}\text{.}  \tag{2.10}
\end{equation}%
Integrating (1.1) from $t^{\ast }$ to $t$ and using Lemma 1, we have%
\begin{equation*}
x(t)-x(t^{\ast })+x\left( h(t)\right) \int_{t^{\ast
}}^{t}\dsum\limits_{i=1}^{m}p_{i}(\zeta )a_{r}(h(t),\tau _{i}(\zeta ))d\zeta
\leq 0.
\end{equation*}%
Hence%
\begin{equation*}
\int_{t^{\ast }}^{t}\dsum\limits_{i=1}^{m}p_{i}(\zeta )a_{r}(h(t),\tau
_{i}(\zeta ))d\zeta \leq \frac{x(t^{\ast })}{x\left( h(t)\right) }\text{,}
\end{equation*}%
or 
\begin{equation*}
\int_{t^{\ast }}^{t}\dsum\limits_{i=1}^{m}p_{i}(\zeta )a_{r}(h(\zeta ),\tau
_{i}(\zeta ))d\zeta \leq \frac{x(t^{\ast })}{x\left( h(t)\right) }\text{,}
\end{equation*}%
which, in view of (2.10), gives%
\begin{equation}
\int_{t^{\ast }}^{t}\dsum\limits_{i=1}^{m}p_{i}(\zeta )a_{r}(h(\zeta ),\tau
_{i}(\zeta ))d\zeta \leq \frac{1}{\lambda _{0}-\varepsilon }.  \tag{2.11}
\end{equation}%
Dividing (1.1) by $x\left( t\right) ,$ integrating from $h(t)$ to $t^{\ast }$
and using Lemma 1, we have%
\begin{equation}
-\int_{h(t)}^{t^{\ast }}\frac{x^{\prime }(\zeta )}{x\left( \zeta \right) }%
d\zeta \geq \int_{h(t)}^{t^{\ast }}\dsum\limits_{i=1}^{m}p_{i}(\zeta )\frac{%
x\left( h(\zeta )\right) }{x\left( \zeta \right) }a_{r}(h(\zeta ),\tau
_{i}(\zeta ))d\zeta \text{.}  \tag{2.12}
\end{equation}%
Taking into account the fact that $\zeta \geq h(t)$ the inequality (2.9)
guarantees that%
\begin{equation}
\frac{x\left( h(\zeta )\right) }{x\left( \zeta \right) }>\lambda
_{0}-\varepsilon ,\ \ \ \ \text{for all }\zeta \geq h(t)\geq t_{2}\text{.} 
\tag{2.13}
\end{equation}%
In view of this, (2.12) gives%
\begin{equation}
-\int_{h(t)}^{t^{\ast }}\frac{x^{\prime }(\zeta )}{x\left( \zeta \right) }%
d\zeta >\left( \lambda _{0}-\varepsilon \right) \int_{h(t)}^{t^{\ast
}}\dsum\limits_{i=1}^{m}p_{i}(\zeta )a_{r}(h(\zeta ),\tau _{i}(\zeta
))d\zeta \text{,}  \notag
\end{equation}%
or%
\begin{equation*}
\int_{h(t)}^{t^{\ast }}\dsum\limits_{i=1}^{m}p_{i}(\zeta )a_{r}(h(\zeta
),\tau _{i}(\zeta ))d\zeta \leq -\frac{1}{\lambda _{0}-\varepsilon }%
\int_{h(t)}^{t^{\ast }}\frac{x^{\prime }(\zeta )}{x\left( \zeta \right) }%
d\zeta =\frac{1}{\lambda _{0}-\varepsilon }\ln \frac{x\left( h(t)\right) }{%
x(t^{\ast })}
\end{equation*}%
i.e.,%
\begin{equation}
\int_{h(t)}^{t^{\ast }}\dsum\limits_{i=1}^{m}p_{i}(\zeta )a_{r}(h(\zeta
),\tau _{i}(\zeta ))d\zeta \leq \frac{\ln (\lambda _{0}-\varepsilon )}{%
\lambda _{0}-\varepsilon }.  \tag{2.14}
\end{equation}%
Combining the inequalities (2.11) and (2.14), we have%
\begin{equation*}
\int_{h(t)}^{t}\dsum\limits_{i=1}^{m}p_{i}(\zeta )a_{r}(h(\zeta ),\tau
_{i}(\zeta ))d\zeta \leq \frac{1}{\lambda _{0}-\varepsilon }+\frac{\ln
(\lambda _{0}-\varepsilon )}{\lambda _{0}-\varepsilon }.
\end{equation*}%
The last inequality holds true for all real numbers $\varepsilon $ with $%
0<\varepsilon <\lambda _{0}$. Hence, for $\varepsilon \rightarrow 0,$ we have%
\begin{equation*}
\limsup\limits_{t\rightarrow \infty
}\int_{h(t)}^{t}\dsum\limits_{i=1}^{m}p_{i}(\zeta )a_{r}(h(\zeta ),\tau
_{i}(\zeta ))d\zeta \leq \frac{1+\ln \lambda _{0}}{\lambda _{0}},
\end{equation*}%
which contradicts (2.8). The proof of the theorem is complete.
\end{proof}

\begin{example}
Consider the delay differential equation%
\begin{equation}
x^{\prime }(t)+\frac{27}{200}x(\tau _{1}(t))+\frac{27}{200}x(\tau _{2}(t))=0,%
\text{ \ \ \ }t\geq 0\text{,}  \tag{2.15}
\end{equation}%
with%
\begin{equation*}
\tau _{1}(t)=\left\{ 
\begin{array}{ll}
t-1, & \text{if }t\in \left[ 3k,3k+1\right]  \\ 
-3t+12k+3, & \text{if }t\in \left[ 3k+1,3k+2\right]  \\ 
5t-12k-13, & \text{if }t\in \left[ 3k+2,3k+3\right] 
\end{array}%
\right. \text{ \ and \ }\tau _{2}(t)=\tau _{1}(t)-0.1\text{, \ \ \ }k\in 
\mathbb{N}
_{0}\text{,}
\end{equation*}%
where $%
\mathbb{N}
_{0}$ is the set of non-negative integers.

By $(1.6)$, we see that%
\begin{equation*}
h_{1}(t):=\sup_{s\leq t}\tau _{1}(s)=\left\{ 
\begin{array}{ll}
t-1, & \text{if }t\in \left[ 3k,3k+1\right] \\ 
3k, & \text{if }t\in \left[ 3k+1,3k+2.6\right] \\ 
5t-12k-13, & \text{if }t\in \left[ 3k+2.6,3k+3\right]%
\end{array}%
\right. \text{ \ and \ }h_{2}(t)=h_{1}(t)-0.1
\end{equation*}%
and consequently%
\begin{equation*}
h(t)=\max_{1\leq i\leq 2}\left\{ h_{i}(t)\right\} =h_{1}(t)\text{.}
\end{equation*}%
Observe that the function $f:%
\mathbb{R}
_{0}\rightarrow 
\mathbb{R}
_{+}$ defined as $f_{r}(t)=\int_{h(t)}^{t}\sum_{i=1}^{m}p_{i}(\zeta
)a_{r}(h(\zeta ),\tau _{i}(\zeta ))~d\zeta $ attains its maximum at $%
t=3k+2.6,$ $k\in 
\mathbb{N}
_{0}$, for every $r\in 
\mathbb{N}
$. Specifically,%
\begin{eqnarray*}
&&f_{1}(t=3k+2.6)=\int_{3k}^{3k+2.6}\sum_{i=1}^{2}p_{i}(\zeta )a_{1}(h(\zeta
),\tau _{i}(\zeta ))\,d\zeta \\
&=&\int_{3k}^{3k+1}\left[ p_{1}(\zeta )a_{1}(h(\zeta ),\tau _{1}(\zeta
))+p_{2}(\zeta )a_{1}(h(\zeta ),\tau _{2}(\zeta ))\right] \,d\zeta \\
&&+\int_{3k+1}^{3k+2}\left[ p_{1}(\zeta )a_{1}(h(\zeta ),\tau _{1}(\zeta
))+p_{2}(\zeta )a_{1}(h(\zeta ),\tau _{2}(\zeta ))\right] \,d\zeta \\
&&+\int_{3k+2}^{3k+2.6}\left[ p_{1}(\zeta )a_{1}(h(\zeta ),\tau _{1}(\zeta
))+p_{2}(\zeta )a_{1}(h(\zeta ),\tau _{2}(\zeta ))\right] \,d\zeta
\end{eqnarray*}%
where%
\begin{equation*}
\begin{array}{c}
\int_{3k}^{3k+1}p_{1}(\zeta )a_{1}(h(\zeta ),\tau _{1}(\zeta ))\,d\zeta
=0.135 \\ 
\\ 
\int_{3k}^{3k+1}p_{2}(\zeta )a_{1}(h(\zeta ),\tau _{2}(\zeta ))\,d\zeta
\simeq 0.138695 \\ 
\\ 
\int_{3k+1}^{3k+2}p_{1}(\zeta )a_{1}(h(\zeta ),\tau _{1}(\zeta ))\,d\zeta
\simeq 0.207985 \\ 
\\ 
\int_{3k+1}^{3k+2}p_{2}(\zeta )a_{1}(h(\zeta ),\tau _{2}(\zeta ))\,d\zeta
\simeq 0.213677 \\ 
\\ 
\int_{3k+2}^{3k+2.6}p_{1}(\zeta )a_{1}(h(\zeta ),\tau _{1}(\zeta ))\,d\zeta
\simeq 0.124791 \\ 
\\ 
\int_{3k+2}^{3k+2.6}p_{2}(\zeta )a_{1}(h(\zeta ),\tau _{2}(\zeta ))\,d\zeta
\simeq 0.128206%
\end{array}%
\end{equation*}%
Thus%
\begin{equation*}
\limsup_{t\rightarrow \infty }f_{1}(t)=\limsup_{t\rightarrow \infty
}\int_{h(t)}^{t}\sum_{i=1}^{m}p_{i}(\zeta )a_{1}(h(\zeta ),\tau _{i}(\zeta
))~d\zeta \simeq 0.948354\text{.}
\end{equation*}%
Now, we see that%
\begin{equation*}
\alpha =\liminf_{t\rightarrow \infty }\int_{\tau
(t)}^{t}\sum_{i=1}^{m}p_{i}(s)ds=\frac{27}{100}\liminf_{t\rightarrow \infty
}\left( t-\tau (t)\right) =0.27<\frac{1}{e}\text{,}
\end{equation*}%
\begin{equation*}
\liminf_{t\rightarrow \infty }\sum\limits_{i=1}^{m}p_{i}(t)\left( t-\tau
_{i}(t)\right) =\frac{27}{200}\cdot 1+\frac{27}{200}\cdot 1.1=0.2835<\frac{1%
}{e},
\end{equation*}%
and%
\begin{equation*}
\limsup\limits_{t\rightarrow \infty
}\int_{h(t)}^{t}\dsum\limits_{i=1}^{m}p_{i}(\zeta )a_{r}(h(t),\tau
_{i}(\zeta ))d\zeta \simeq 0.988865<1
\end{equation*}%
that is, none of the conditions $(1.3)$, $(1.4)$ and $(1.8)$ is satisfied.

Observe, however, that the smaller root of $e^{\alpha \lambda }=\lambda $ is 
$\lambda _{0}=1.49883$. Thus%
\begin{equation*}
0.948354>\frac{1+\ln \lambda _{0}}{\lambda _{0}}\simeq 0.937188\text{.}
\end{equation*}%
That is, condition $(2.8)$ of Theorem 2 is satisfied for $r=1$, and
therefore all solutions of $(2.15)$ oscillate.
\end{example}

\begin{acknowledgement}
The authors would like to thank both referees for the constructive remarks
which improved the presentation of the paper.
\end{acknowledgement}


\begin{thebibliography}{99}
\bibitem{1} R.P. Agarwal, L. Berezansky, E. Braverman and A. Domoshnitsky, 
\textit{Nonoscillation Theory of Functional Differential Equations with
Application}s, Springer, New York, 2012.

\bibitem{2} E. Braverman, G. E. Chatzarakis and I. P. Stavroulakis,
Iterative oscillation tests for differential equations with several
non-monotone arguments, \textit{Adv. Difference Equ.}, 2016 (in press).

\bibitem{3} E. Braverman, B. Karpuz, On oscillation of differential and
difference equations with non-monotone delays, \textit{Appl. Math. Comput.,} 
\textbf{218} (2011) 3880--3887.

\bibitem{4} L. H. Erbe, Qingkai Kong and B.G. Zhang, Oscillation \textit{%
Theory for Functional Differential Equation}s, Marcel Dekker, New York, 1995.

\bibitem{5} L. H. Erbe and B. G. Zhang, Oscillation of first order linear
differential equations with deviating arguments, \textit{Differential
Integral Equations}, \textbf{1} (1988), 305-314.

\bibitem{6} N. Fukagai and T. Kusano, Oscillation theory of first order
functional-differential equations with deviating arguments, \textit{Ann.
Mat. Pura Appl. }\textbf{136}\textit{\ }(1984), 95--117.

\bibitem{7} B.\thinspace R. Hunt and J.\thinspace A. Yorke, When all
solutions of $x^{\prime }(t)=-\sum q_{i}(t)x(t-T_{i}(t))$ oscillate, \emph{%
J. Differential Equations} \textbf{53} (1984), 139--145.

\bibitem{8} G. Infante, R.Koplatadze and I. P. Stavroulakis, Oscillation
criteria for differential equations with several retarded arguments, \emph{%
\textit{Funkcial. Ekvac., }}\textbf{58} (2015), 347--364.

\bibitem{9} R.G. Koplatadze and G. Kvinikadze, On the oscillation of
solutions of first order delay differential inequalities and equations, 
\textit{Georgian Math. J.}, \textbf{3} (1994), 675--685.

\bibitem{10} T. Kusano, On even-order functional-differential equations with
advanced and retarded arguments, \emph{\textit{J. Differential Equations }}%
\textbf{45} (1982), 75--84.

\bibitem{11} M. K. Kwong, Oscillation of first-order delay equations, \emph{%
\textit{J. Math. Anal. Appl. }}\textbf{156} (1991), 274--286.

\bibitem{12} G. Ladas and I. P. Stavroulakis, Oscillations caused by several
retarded and advanced arguments, \emph{\textit{J. Differential Equations }}%
\textbf{44} (1982), 134--152.

\bibitem{13} G.S. Ladde, Oscillations caused by retarded perturbations of
first order linear ordinary differential equations, \emph{\textit{Atti Acad.
Naz. Lincei Rendiconti }}\textbf{63} (1978), 351--359.

\bibitem{14} G.S. Ladde, V. Lakshmikantham, B.G. Zhang, \textit{Oscillation
Theory of Differential Equations with Deviating Arguments}, Monographs and
Textbooks in Pure and Applied Mathematics, vol. 110, Marcel Dekker, Inc.,
New York, 1987.

\bibitem{15} H. Onose, Oscillatory properties of the first-order
differential inequalities with deviating argument, \textit{Funkcial. Ekvac.} 
\textbf{26} (1983), 189--195.

\bibitem{16} I.\thinspace P. Stavroulakis, Oscillation criteria for delay
and difference equations with non-monotone arguments, \emph{Appl. Math.
Comput.} \textbf{226} (2014), 661-672.

\bibitem{17} X.H. Tang, Oscillation of first order delay differential
equations with distributed delay, \textit{J. Math. Anal. Appl.} \textbf{289}
(2004), 367-378.

\bibitem{18} D. Zhou, On some problems on oscillation of functional
differential equations of first order, \emph{J. Shandong University} \textbf{%
25} (1990), 434--442.
\end{thebibliography}
\end{document}